\newtheorem{theorem}{Theorem}[section]
\newtheorem{proposition}[theorem]{Proposition}
\theoremstyle{definition}
\newtheorem{example}[theorem]{Example}
\begin{document}

\def\Im#1{
 \mathrm{Im} \hskip 0.2em #1
}

\title[Quandle and hyperbolic volume]{Quandle and hyperbolic volume}
\author{Ayumu Inoue}
\address{Department of Mathematics, Tokyo Institute of Technology, Oh--okayama, Meguro--ku, Tokyo, 152--8551 Japan}
\email{ayumu7@is.titech.ac.jp}

\subjclass[2000]{Primary 57M25; Secondary 57T99}
\keywords{quandle, quandle cocycle invariant, hyperbolic knot, hyperbolic volume, triangulation, invertibility, amphicheirality}

\begin{abstract}
We show that the hyperbolic volume of a hyperbolic knot is a quandle cocycle invariant.
Further we show that it completely determines invertibility and positive/negative amphicheirality of hyperbolic knots.
\end{abstract}

\maketitle

\section{Introduction}\label{sec:introduction}

A quandle introduced by D. Joyce \cite{Joyce1} and S. V. Matveev \cite{Matveev1} independently, is an algebraic system having a self-distributive binary operation whose definition is motivated by knot theory.
They defined the knot quandle, and showed that it completely classifies knots.
J. S. Carter et al. have developed a theory of quandle cocycle invariants in \cite{CJKLS1}.
Several useful applications of quandle homology/cohomology theory have been established; distinguishing the unknot \cite{Eisermann1}, determining non-invertibility of classical/surface knots \cite{CJKLS1, RS1, Satoh1}, and estimating the minimal triple point number of a surface knot \cite{SS1}, for examples.
However, there seems to be no conceptual understanding of quandle cocycle invariants so far.

In this paper, we would like to present such one by showing that there is a quandle cocycle invariant whose each element is $1$, $-1$ or $0$ times volume for the hyperbolic knots (Theorem \ref{thm:2_cocycle}).
Further we show that it completely determines invertibility and positive/negative amphicheirality of hyperbolic knots (Theorem \ref{thm:negative_amphichirality}, \ref{thm:invertibility}, and \ref{thm:positive_amphichirality}).

\subsection*{Acknowledgments}
The author would like to express his sincere gratitude to Professor Sadayoshi Kojima for encouraging him.
He is also grateful to Dr. Shigeru Mizushima for his invaluable comments.
This research has been supported in part by JSPS Global COE program ``Computationism as a Foundation for the Sciences''.

\section{Preliminaries}\label{sec:preliminaries}

\subsection{Knot quandle}\label{subsec:knot_quandle}

In this subsection, we briefly recall the definition of a quandle and the knot quandle.
See \cite{FR1, Joyce1, Matveev1} for examples for more details.

A {\it quandle} is defined to be a set $Q$ with a binary operation $\ast$ on $Q$ satisfying the following properties:
\begin{itemize}
\item[(Q1)] For each $x \in Q$, $x \ast x = x$.
\item[(Q2)] For each $y \in Q$, the map $\ast y : Q \rightarrow Q$ ($x \mapsto x \ast y$) is bijective.
\item[(Q3)] For each triple $x,y,z \in Q$, $(x \ast y) \ast z = (x \ast z) \ast (y \ast z)$.
\end{itemize}
For example, if we define a binary operation $\ast$ on a subset $X$ of a group $G$ closed under conjugations by
\[
a \ast b = b^{-1}ab \quad ({}^{\forall} a, b \in X)
\]
then $X$ together with $\ast$ becomes a quandle.
We call it the {\it conjugation quandle}.

Suppose that $K$ is an oriented prime knot in $S^{3}$.
It is easy to see that the set $\mathcal{Q}(K)$ of positive meridians of $\pi_{1}(S^{3} \setminus K)$, which are oriented meridians compatible with the orientation of the knot, is closed under conjugations.
The {\it knot quandle} of $K$ is defined to be its conjugation quandle.

\subsection{Quandle cocycle invariant}\label{subsec:quandle_cocycle_invariant}

In this subsection, we briefly recall the definition of a quandle cocycle invariant.
See \cite{CEGS1, CJKLS1, FRS1, FRS2, Kamada1} for examples for more details.

Let $\mathcal{F}(Q)$ be the free group on $Q$ and $\mathcal{N}(Q)$ the subgroup of $\mathcal{F}(Q)$ normally generated by
\[
 y^{-1} \hskip 0.2em x \hskip 0.2em y \hskip 0.2em (x \ast y)^{-1} \quad ({}^{\forall}x, y \in Q).
\]
We denote the quotient group $\mathcal{F}(Q) / \mathcal{N}(Q)$ by $\mathcal{G}(Q)$.

Suppose that $D$ is a diagram of an oriented knot $K$.
An {\it arc coloring} of $D$ is defined to be a map
\[
 \mathcal{A} : \{ \mathrm{arcs} \ \mathrm{of} \ D \} \longrightarrow Q
\]
satisfying the condition illustrated in the left-hand side of Figure \ref{fig:coloring} at each crossing point.
Further a {\it region coloring} of $D$ is defined to be a map
\[
 \mathcal{R} : \{ \mathrm{regions} \ \mathrm{of} \ D \} \longrightarrow Y,
\]
where $Y$ is a set equipped with a right action of $\mathcal{G}(Q)$, satisfying the condition depicted in the right-hand side of Figure \ref{fig:coloring} around each arc.
We call a pair ($\mathcal{A}$, $\mathcal{R}$) a {\it shadow coloring} of $D$, and denote by $\mathcal{S}$.

\begin{figure}[htb]
\begin{center}
\includegraphics[scale=0.5]{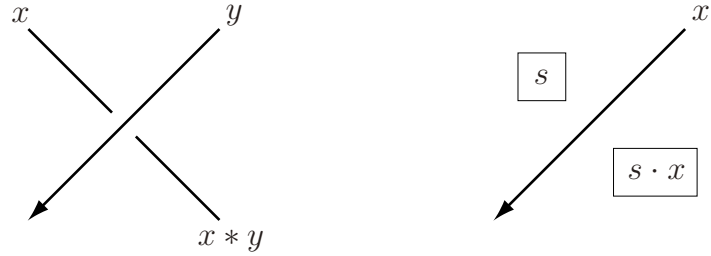}
\end{center}
\vspace{-7pt}
\caption{Rules for colorings}
\label{fig:coloring}
\end{figure}

Choose an abelian group $A$.
An $A$-valued {\it quandle $2$-cocycle} with respect to $Q$ and $Y$ is defined to be a map
\[
 \theta : Y \times Q \times Q \longrightarrow A
\]
satisfying the following conditions:
\begin{itemize}
\item[\hskip 0.1em($\mathrm{i}$)\hskip 0.2em] For each $r \in Y$ and $x \in Q$, $\theta(r, x, x) = 0$.
\item[($\mathrm{ii}$)] For each $r \in Y$ and $x, y, z \in Q$,
\begin{eqnarray*}
 & \hskip -5em \theta(r, x, y) + \theta(r \cdot y, x \ast y, z) + \theta(r, y, z) \\
 & \hskip 1em = \theta(r \cdot x, y, z) + \theta(r, x, z) + \theta(r \cdot z, x \ast z, y \ast z).
\end{eqnarray*}
\end{itemize}
For each crossing point $c$ of $D$, a Boltzmann weight of $c$ is defined as
\[
 \hskip 0.5em B(\mathcal{S}, \theta, c) = \varepsilon(c) \theta(r, x, y),
\]
where $\varepsilon(c)$ is $1$ or $-1$ depending on whether $c$ is positive or negative respectively, and $r \in Y$ and $x, y \in Q$ denote colors around $c$ as depicted in Figure \ref{fig:boltzmann_weight}.
Further we let
\[
 \Phi(\mathcal{S}, \theta) = \sum_{c \in \mathcal{C}} B(\mathcal{S}, \theta, c),
\]
where $\mathcal{C}$ denotes the set of crossing points of $D$.

\begin{theorem}[\cite{CJKLS1, FRS1, Kamada1}]
The multi-set
\[
 \{ \Phi(\mathcal{S}, \theta) \in A \mid \mathcal{S} \hskip 0.3em \mathrm{is \hskip 0.3em a \hskip 0.3em shadow \hskip 0.3em coloring \hskip 0.3em of} \hskip 0.3em D \hskip 0.3em \mathrm{with \hskip 0.3em respect \hskip 0.3em to} \hskip 0.3em Q \hskip 0.3em \mathrm{and} \hskip 0.3em Y \}
\]
does not depend on the choice of a diagram $D$ of $K$.
\end{theorem}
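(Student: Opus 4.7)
The plan is to invoke Reidemeister's theorem: any two diagrams of $K$ are related by a finite sequence of Reidemeister moves, so it suffices to exhibit, for each such move, a bijection between shadow colorings of the two diagrams that preserves $\Phi(\mathcal{S}, \theta)$. For each move I would work locally: fix a shadow coloring outside a small disk in which the move takes place, extend it (essentially uniquely, once the boundary colors are prescribed) to a shadow coloring of each side, and then compare the contributions to $\Phi$ coming from the crossings inside the disk. The crossings outside the disk contribute identically on both sides, so only the local sums matter.

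For Reidemeister I, the two arcs at the new self-crossing are forced by the arc-coloring rule to carry the same element $x \in Q$, the adjacent region carries some $r \in Y$, and the sole local contribution is $\pm\,\theta(r, x, x)$, which vanishes by axiom (i). For Reidemeister II, axiom (Q2) provides the unique extension of the coloring across the move, and a direct inspection shows that the two new crossings carry opposite signs while producing the same triple $(r', x', y')$; hence their Boltzmann weights cancel.

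The main obstacle will be Reidemeister III, where I expect the cocycle condition (ii) to enter in exactly the designed way. Labeling the three strands by $x, y, z \in Q$ and one chosen local region by $r \in Y$, axiom (Q3) together with the rules in Figure \ref{fig:coloring} force every other arc and region color around the triangle on both sides of the move. Summing the Boltzmann weights at the three crossings on each side and comparing what remains yields precisely the identity in condition (ii), so the two sums agree. The delicate step here is matching the visual configuration of arcs, regions, and crossing signs around the triangle with the correct arguments of $\theta$, in particular tracking how the $\mathcal{G}(Q)$-action $r \mapsto r \cdot y$ etc.\ propagates the region color across each arc. This bookkeeping---rather than any conceptual difficulty---is where the bulk of the verification lies, and it is the step most prone to sign or ordering errors.

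Concatenating the bijections produced by the individual moves along the sequence relating any two diagrams of $K$ then gives the required equality of multi-sets, completing the invariance argument.
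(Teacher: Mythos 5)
Your proposal is correct and is exactly the standard argument: the paper itself states this theorem with a citation rather than a proof, and the proofs in the cited references proceed precisely by checking invariance of the weighted sum under the three Reidemeister moves, with axiom (i) handling the first move, sign cancellation the second, and the cocycle condition (ii) the third, together with the induced bijection of shadow colorings.
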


\noindent
We call this multi-set a {\it quandle cocycle invariant} of $K$.

\begin{figure}[htb]
\begin{center}
\includegraphics[scale=0.5]{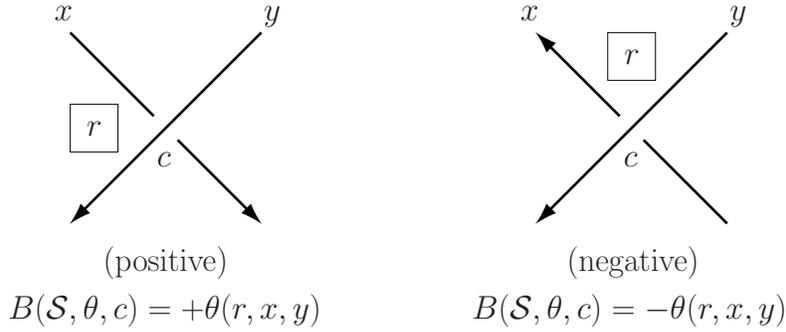}
\end{center}
\vspace{-10pt}
\caption{Boltzmann weight of a positive/negative crossing}
\label{fig:boltzmann_weight}
\end{figure}

\section{Hyperbolic volume is quandle cocycle invariant}\label{sec:hyperbolic_volume_is_quandle_cocycle_invariant}

Let $K$ be an oriented hyperbolic knot in $S^{3}$,
\[
 \Psi : \mathbb{H}^{3} \longrightarrow S^{3} \setminus K
\]
the universal covering, $p \in S^{3} \setminus K$ a base point of $\pi_{1}(S^{3} \setminus K)$, and $\widetilde{p} \in \Psi^{-1}(p)$.
Then we have a holonomy representation
\[
 \rho : \pi_{1}(S^{3} \setminus K) {\longrightarrow} \mathrm{Isom}_{+} \mathbb{H}^{3}.
\]
For each positive meridian $x \in \mathcal{Q}(K)$, remarking that $\rho(x)$ is parabolic, we denote the fixed point of $\rho(x)$ on $\partial \overline{\mathbb{H}^{3}} = S^{2}_{\infty}$ by $x_{\infty}$.

Let $q \in S^{3} \setminus K$ be a point other than $p$, and $Z$ the set of homotopy classes of paths from $p$ to $q$.
Then $Z$ admits the right action of $\mathcal{G}(\mathcal{Q}(K))$ by composing the inverse of a closed loop representing an element of $\mathcal{G}(\mathcal{Q}(K))$ by the left.
For each $r \in Z$, $\widetilde{r}$ denotes a lift of a representative path of $r$ satisfying $\widetilde{r}(0) = \widetilde{p}$.

For each $r \in Z$ and $x, y \in \mathcal{Q}(K)$, we define a $3$-dimensional singular chain $C_{r, x, y}$ of $S^{3} \setminus K$ as
\begin{eqnarray*}
 C_{r, x, y} & = & \{ \widetilde{p}, \widetilde{r}(1), x_{\infty}, y_{\infty} \} + \{ \widetilde{p}, \widetilde{r \cdot x}(1), y_{\infty}, x_{\infty} \} \\
 & & \hskip 0.5em + \{ \widetilde{p}, \widetilde{r \cdot x y}(1), (x \ast y)_{\infty}, y_{\infty} \} + \{ \widetilde{p}, \widetilde{r \cdot y}(1), y_{\infty}, (x \ast y)_{\infty} \},
\end{eqnarray*}
where $\{ v_{0}, v_{1}, v_{2}, v_{3} \}$ ($v_{0}, v_{1}, v_{2}, v_{3} \in \overline{\mathbb{H}^{3}}$) denotes a singular simplex of $S^{3} \setminus K$ defined as a map from the tetrahedron $\Delta^{3}$ possibly with ideal vertices to the image of a geodesic tetrahedron spun by $v_{0}, v_{1}, v_{2}$, and $v_{3}$ by $\Psi$, under the assumption that ideal vertices are properly understood.
Further we define a map
\[
 \mathrm{vol} : Z \times \mathcal{Q}(K) \times \mathcal{Q}(K) \longrightarrow \mathbb{R}
\]
by
\begin{eqnarray*}
 \mathrm{vol}(r, x, y) & = & \mathrm{algvol}(\{ \widetilde{p}, \widetilde{r}(1), x_{\infty}, y_{\infty} \}) \\
 & & \hskip 1em + \mathrm{algvol}(\{ \widetilde{p}, \widetilde{r \cdot x}(1), y_{\infty}, x_{\infty} \}) \\
 & & \hskip 3em + \mathrm{algvol}(\{ \widetilde{p}, \widetilde{r \cdot x y}(1), (x \ast y)_{\infty}, y_{\infty} \}) \\
 & & \hskip 5em + \mathrm{algvol}(\{ \widetilde{p}, \widetilde{r \cdot y}(1), y_{\infty}, (x \ast y)_{\infty} \}),
\end{eqnarray*}
where $\mathrm{algvol}(\{ v_{0}, v_{1}, v_{2}, v_{3} \})$ denotes the algebraic volume of a singular simplex $\{ v_{0}, v_{1}, v_{2}, v_{3} \}$.
It is convenient that we extend the domain of $\mathrm{algvol}(\cdot)$ to singular chains linearly.
Then $\mathrm{vol}(r, x, y) = \mathrm{algvol}(C_{r, x, y})$.

\begin{proposition}\label{prop:2_cocycle}
$\mathrm{vol}$ is an $\mathbb{R}$-valued quandle $2$-cocycle with respect to $\mathcal{Q}(K)$ and $Z$.
\end{proposition}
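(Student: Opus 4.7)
The plan is to verify the two defining conditions of a quandle $2$-cocycle directly, by reducing everything to basic facts about the algebraic volume of geodesic tetrahedra in $\overline{\mathbb{H}^3}$.

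For condition (i), setting $y = x$ and applying the idempotency axiom $x \ast x = x$, every one of the four geodesic tetrahedra constituting $C_{r,x,x}$ has the ideal vertex $x_\infty$ appearing twice. Any such tetrahedron is geodesically degenerate, so its algebraic volume vanishes, whence $\mathrm{vol}(r, x, x) = 0$. For condition (ii), I would write $[v_0, v_1, v_2, v_3] := \mathrm{algvol}(\{v_0, v_1, v_2, v_3\})$ for the signed volume of the geodesic $3$-simplex with ordered vertices in $\overline{\mathbb{H}^3}$. Since the hyperbolic volume form is a closed $3$-form on $\mathbb{H}^3$, Stokes' theorem applied to a geodesic $4$-simplex yields the Schl\"afli $5$-term relation
\[
\sum_{i=0}^{4}(-1)^{i}\, [v_0, \ldots, \widehat{v_i}, \ldots, v_4] = 0, \qquad v_0, \ldots, v_4 \in \overline{\mathbb{H}^3}.
\]
The strategy is to express the difference of the two sides of (ii) as a signed sum of such $5$-term expressions, plus degenerate brackets (which vanish for the same reason as in (i)).

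To execute this, I first rewrite the vertices of all $24$ tetrahedra using the relation $y(x \ast y) = x y$ in $\mathcal{G}(Q)$ (so that, for example, $\widetilde{r \cdot y\,(x\ast y)}(1) = \widetilde{r \cdot x y}(1)$) and the self-distributivity $(x \ast y)\ast z = (x \ast z)\ast (y \ast z)$ (so that the two descriptions of $((x\ast y)\ast z)_\infty$ agree). After these identifications, the finite vertices lie in $\{\widetilde{p}\} \cup \{\widetilde{r\cdot g}(1) : g \in \{1, x, y, z, xy, xz, yz, xyz\}\}$ and the ideal vertices in $\{x_\infty, y_\infty, z_\infty, (x\ast y)_\infty, (x\ast z)_\infty, (y\ast z)_\infty, ((x\ast y)\ast z)_\infty\}$. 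The difference of the two sides of (ii) then becomes a signed sum of $24$ brackets on this common vertex set, which I claim can be grouped into finitely many Schl\"afli $5$-term expressions associated with suitably chosen $5$-point sub-configurations, with every residual bracket involving a repeated vertex.

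The principal obstacle is identifying these $4$-simplices explicitly: the combinatorial bookkeeping is the core of the argument, and geometrically it amounts to thickening the Reidemeister III move underlying the quandle cocycle identity into a family of geodesic $4$-simplices in $\overline{\mathbb{H}^3}$ whose boundaries produce precisely the difference $C_{r,x,y} + C_{r\cdot y, x\ast y, z} + C_{r,y,z} - C_{r\cdot x, y, z} - C_{r, x, z} - C_{r\cdot z, x\ast z, y\ast z}$ up to degenerate $3$-simplices. Once such a list is in hand and one checks that the non-Schl\"afli residues are all degenerate, the $5$-term relation disposes of everything and (ii) follows, completing the proof that $\mathrm{vol}$ is an $\mathbb{R}$-valued quandle $2$-cocycle with respect to $\mathcal{Q}(K)$ and $Z$.
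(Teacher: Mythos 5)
Your proposal is correct and follows essentially the same route as the paper: condition (i) via degeneracy of the four simplices of $C_{r,x,x}$, and condition (ii) by observing that the two sides of the hexagon relation describe retriangulations of the same configuration, which the paper phrases as ``several Pachner moves'' and you phrase equivalently as grouping the difference into five-term (boundary-of-a-$4$-simplex) relations plus degenerate simplices. Both you and the paper leave the explicit combinatorial bookkeeping unexecuted, so your level of detail matches the original.
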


\begin{proof}
For each $r \in Z$ and $x \in \mathcal{Q}(K)$, it is obvious that each simplex constructing a singular chain $C_{r, x, x}$ degenerates.
Thus
\[
 \mathrm{vol}(r, x, x) = \mathrm{algvol}(C_{r, x, x}) = 0.
\]

For each $r \in Z$ and $x, y, z \in \mathcal{Q}(K)$, it is routine to check that several Pachner moves transform a singular chain $(C_{r, x, y} + C_{r \cdot y, x \ast y, z} + C_{r, y, z})$ into another singular chain $(C_{r \cdot x, y, z} + C_{r, x, z} + C_{r \cdot z, x \ast z, y \ast z})$.
Thus
\begin{eqnarray*}
 & & \mathrm{vol}(r, x, y) + \mathrm{vol}(r \cdot y, x \ast y, z) + \mathrm{vol}(r, y, z) \\
 & & \hskip 1em - \mathrm{vol}(r \cdot x, y, z) - \mathrm{vol}(r, x, z) - \mathrm{vol}(r \cdot z, x \ast z, y \ast z) \\
 & = & \mathrm{algvol}(C_{r, x, y} + C_{r \cdot y, x \ast y, z} + C_{r, y, z}) \\
 & & \hskip 1em - \mathrm{algvol}(C_{r \cdot x, y, z} + C_{r, x, z} + C_{r \cdot z, x \ast z, y \ast z}) \\
 & = & 0.
\end{eqnarray*}
\end{proof}

\begin{proposition}\label{prop:2_cocycle_P}
For each shadow coloring $\mathcal{S}$ of a diagram $D$ of $K$ with respect to $\mathcal{Q}(K)$ and $Z$, there exists $k \in \mathbb{Z}$ such that
\[
 \Phi(\mathcal{S}, \mathrm{vol}) = k \cdot \mathrm{vol}(S^{3} \setminus K).
\]
\end{proposition}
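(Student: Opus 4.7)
The plan is to rewrite $\Phi(\mathcal{S}, \mathrm{vol})$ as the algebraic volume of a single $3$-chain of $M = S^{3} \setminus K$, to verify that this chain is a relative cycle, and finally to invoke the fact that on a finite-volume hyperbolic $3$-manifold the algebraic volume of any fundamental relative cycle equals an integer multiple of $\mathrm{vol}(M)$. Explicitly, set
\[
 \Sigma_{\mathcal{S}} = \sum_{c \in \mathcal{C}} \varepsilon(c) \, C_{r_{c}, x_{c}, y_{c}},
\]
where $(r_{c}, x_{c}, y_{c})$ denote the colors around a crossing $c$ as in the definition of the Boltzmann weight. Linearity of $\mathrm{algvol}$ immediately gives $\Phi(\mathcal{S}, \mathrm{vol}) = \mathrm{algvol}(\Sigma_{\mathcal{S}})$.

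The heart of the argument is the claim that $\Sigma_{\mathcal{S}}$ is a relative $3$-cycle of the pair $(\overline{M}, \partial \overline{M})$, where $\overline{M}$ is the compact core of $M$ with toroidal boundary. Each $C_{r, x, y}$ is built from four geodesic tetrahedra whose finite vertices are $\widetilde{p}$ and the lifts $\widetilde{r \cdot g}(1)$ for $g \in \{1, x, y, xy\}$, and whose ideal vertices lie in $\{ x_{\infty}, y_{\infty}, (x \ast y)_{\infty} \}$. Their $2$-faces split into purely ideal triangles (which project to $\partial \overline{M}$), triangles with two ideal and one finite vertex (candidates for cancellation along an arc of $D$), and triangles with one ideal and two finite vertices (candidates for cancellation across a region of $D$). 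I would pair the internal faces crossing by crossing, using the arc coloring rule to match the quandle triples $(x, y, x \ast y)$ at the crossing and the shadow region rule to match the $\mathcal{G}(\mathcal{Q}(K))$-action on the path homotopy classes in $Z$. If the bookkeeping is carried through, every interior face cancels with an oppositely oriented face contributed by an adjacent crossing, so $\partial \Sigma_{\mathcal{S}}$ is supported on $\partial \overline{M}$.

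Granting this, $[\Sigma_{\mathcal{S}}] \in H_{3}(\overline{M}, \partial \overline{M}; \mathbb{Z}) \cong \mathbb{Z}$, hence $[\Sigma_{\mathcal{S}}] = k \cdot [\overline{M}, \partial \overline{M}]$ for a unique $k \in \mathbb{Z}$. Since the pullback under $\Psi$ of the hyperbolic volume form is a smooth bounded $3$-form whose integral over a geodesic simplex equals $\mathrm{algvol}$, it descends to a well-defined linear functional on $H_{3}(\overline{M}, \partial \overline{M}; \mathbb{R})$; by the classical Gromov--Thurston computation its value on the fundamental class is $\mathrm{vol}(M)$. Consequently
\[
 \Phi(\mathcal{S}, \mathrm{vol}) = \mathrm{algvol}(\Sigma_{\mathcal{S}}) = k \cdot \mathrm{vol}(S^{3} \setminus K).
\]

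The main obstacle is the cycle verification in the second paragraph. The $4 |\mathcal{C}|$ tetrahedra contribute $16 |\mathcal{C}|$ oriented $2$-faces, and the pairings are dictated by the combined arc, crossing and shadow region rules; this is of the same flavour as the classical computation showing that the ideal octahedra at the crossings of a diagram assemble into an ideal decomposition of a hyperbolic link complement, but it must be carried out for the specific packaging of four tetrahedra inside each $C_{r, x, y}$. The remaining steps use only the orientation class of a compact oriented $3$-manifold with toroidal boundary and the standard identification of algebraic volume on the fundamental cycle with the hyperbolic volume.
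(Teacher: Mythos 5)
Your overall architecture coincides with the paper's: rewrite $\Phi(\mathcal{S},\mathrm{vol})$ as $\mathrm{algvol}$ of the chain $\Sigma_{\mathcal{S}}=\sum_{c}\varepsilon(c)C_{r,x,y}$, show that this chain is a relative cycle of $\bigl(\overline{S^{3}\setminus K},\partial\overline{S^{3}\setminus K}\bigr)$, and conclude from $H_{3}\bigl(\overline{S^{3}\setminus K},\partial\overline{S^{3}\setminus K}\bigr)\cong\mathbb{Z}$ together with the fact that $\mathrm{algvol}$ evaluates to $\mathrm{vol}(S^{3}\setminus K)$ on the fundamental class. The first and last steps are correct and are exactly what the paper does.

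The gap is the middle step, which is the entire content of the proposition, and your sketch of it would not go through as written. Each tetrahedron in $C_{r,x,y}$ has exactly two finite vertices ($\widetilde{p}$ and a lift of $q$) and two ideal vertices, so its faces come in exactly two kinds: two ideal and one finite vertex, or one ideal and two finite vertices. There are \emph{no} purely ideal triangles, hence no portion of $\partial\Sigma_{\mathcal{S}}$ that can be discarded as ``supported on $\partial\overline{M}$''; a triangle with one finite and two ideal vertices runs through the thick part of the manifold and must genuinely cancel. In the paper these two-ideal-vertex faces cancel in pairs \emph{within a single} $C_{r,x,y}$, via the deck transformations $\rho(x)$ and $\rho(y)$ (for instance $\rho(y)$ fixes $y_{\infty}$, sends $(x\ast y)_{\infty}$ to $x_{\infty}$, and matches the lifts of $q$, so the face of the fourth tetrahedron opposite $\widetilde{p}$ cancels that of the first) --- not along an arc between two crossings as your pairing scheme assigns them; this internal cancellation is precisely what the paper's explicit formula for $\partial C_{r,x,y}$ records. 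Only the faces $\{\widetilde{p},\widetilde{r\cdot g}(1),\cdot_{\infty}\}$ with two finite vertices survive, and those are the ones that cancel between adjacent crossings using the arc- and region-coloring rules. Since you explicitly defer this bookkeeping (``if the bookkeeping is carried through'') and the classification and pairing you describe place the cancellations in the wrong locations, the proof is incomplete at its crucial point.
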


\noindent
Here $\mathrm{vol}(S^{3} \setminus K)$ denotes the hyperbolic volume of $S^{3} \setminus K$.

\begin{proof}
Since 
\[
 \Phi(\mathcal{S}, \mathrm{vol}) = \mathrm{algvol} \biggl( \hskip 0.1em \sum_{c \in \mathcal{C}} \varepsilon(c) C_{r, x, y} \hskip 0.1em \biggr),
\]
where $r \in Z$ and $x, y \in \mathcal{Q}(K)$ denote colors around a crossing point $c$, it is sufficient to prove that
\[
 \partial \biggl( \hskip 0.1em \sum_{c \in \mathcal{C}} \varepsilon(c) C_{r, x, y} \hskip 0.1em \biggr) = 0,
\]
that is,
\[
 \biggl( \hskip 0.1em \overline{\sum_{c \in \mathcal{C}} \varepsilon(c) C_{r, x, y}}, \partial \overline{\sum_{c \in \mathcal{C}} \varepsilon(c) C_{r, x, y}} \hskip 0.1em \biggr) \in \mathrm{Z}_{3} \bigl( \hskip 0.1em \overline{S^{3} \setminus K}, \partial \overline{S^{3} \setminus K} \hskip 0.1em \bigr).
\]
Here $\partial(\cdot)$ denotes the boundary operator, $\overline{S^{3} \setminus K}$ a compactification of $S^{3} \setminus K$ with a torus boundary, and $\displaystyle \biggl( \hskip 0.1em \overline{\sum_{c \in \mathcal{C}} \varepsilon(c) C_{r, x, y}}, \partial \overline{\sum_{c \in \mathcal{C}} \varepsilon(c) C_{r, x, y}} \hskip 0.1em \biggr)$ a relative singular chain with respect to a singular chain $\displaystyle \sum_{c \in \mathcal{C}} \varepsilon(c) C_{r, x, y}$ which is naturally defined by the compactification.
At this time,
\[
 \biggl[ \hskip 0.1em \overline{\sum_{c \in \mathcal{C}} \varepsilon(c) C_{r, x, y}}, \partial \overline{\sum_{c \in \mathcal{C}} \varepsilon(c) C_{r, x, y}} \hskip 0.1em \biggr] \in \mathrm{H}_{3} \bigl( \hskip 0.1em \overline{S^{3} \setminus K}, \partial \overline{S^{3} \setminus K} \hskip 0.1em \bigr)
\]
must be an integral multiple of the fundamental class, and thus
\[
 \mathrm{algvol} \biggl( \hskip 0.1em \sum_{c \in \mathcal{C}} \varepsilon(c) C_{r, x, y} \hskip 0.1em \biggr) = k \cdot \mathrm{vol}(S^{3} \setminus K).
\]

By a straightforward calculation,
\begin{eqnarray*}
 \partial C_{r, x, y} = & \hskip -0.5em & \hskip -0.5em \{ \widetilde{p}, \widetilde{r}(1), y_{\infty} \} - \{ \widetilde{p}, \widetilde{r \cdot y}(1), y_{\infty} \} \\
 & \hskip -0.5em + & \hskip -0.5em \{ \widetilde{p}, \widetilde{r \cdot x}(1), x_{\infty} \} - \{ \widetilde{p}, \widetilde{r}(1), x_{\infty} \} \\
 & \hskip -0.5em + & \hskip -0.5em \{ \widetilde{p}, \widetilde{r \cdot x y}(1), y_{\infty} \} - \{ \widetilde{p}, \widetilde{r \cdot x}(1), y_{\infty} \} \\
 & \hskip -0.5em + & \hskip -0.5em \{ \widetilde{p}, \widetilde{r \cdot y}(1), (x \ast y)_{\infty} \} - \{ \widetilde{p}, \widetilde{r \cdot x y}(1), (x \ast y)_{\infty} \},
\end{eqnarray*}
where $\{ v_{0}, v_{1}, v_{2} \}$ ($v_{0}, v_{1}, v_{2} \in \overline{\mathbb{H}^{3}}$) denotes a $2$-dimensional singular simplex of $S^{3} \setminus K$ defined as a map from the triangle $\Delta^{2}$ possibly with ideal vertices to the image of a geodesic triangle spun by $v_{0}, v_{1}$, and $v_{2}$ by $\Psi$.
Further corresponding to each adjacent crossing points $c$ and $c^{\prime}$, singular chains $\varepsilon(c) \partial C_{r, x, y}$ and $\varepsilon(c^{\prime}) \partial C_{r^{\prime}, x^{\prime}, y^{\prime}}$, where $r^{\prime} \in Z$ and $x^{\prime}, y^{\prime} \in \mathcal{Q}(K)$ denote colors around $c^{\prime}$, have canceling terms as depicted in Figure \ref{fig:cancelling_terms}.
Thus
\[
 \partial \biggl( \hskip 0.1em \sum_{c \in \mathcal{C}} \varepsilon(c) C_{r, x, y} \hskip 0.1em \biggr) = \sum_{c \in \mathcal{C}} \varepsilon(c) \partial C_{r, x, y} = 0.
\]
\end{proof}

\begin{figure}[htb]
\begin{center}
\includegraphics[scale=0.43]{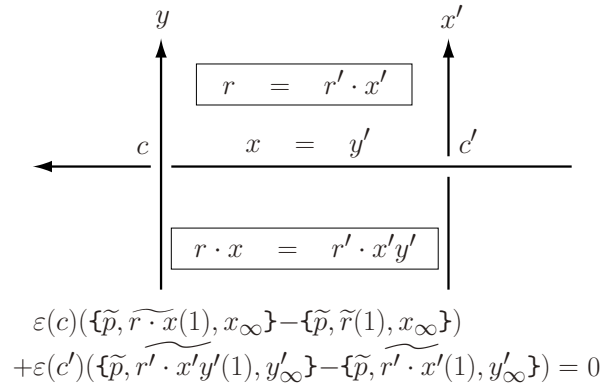}
\end{center}
\vspace{-32pt}
\caption{Adjacent crossing points cancel a pair of terms}
\label{fig:cancelling_terms}
\end{figure}

Furthermore, we refine Proposition \ref{prop:2_cocycle_P} as follows.

\begin{theorem}\label{thm:2_cocycle}
For each shadow coloring $\mathcal{S}$ of a diagram $D$ of $K$ with respect to $\mathcal{Q}(K)$ and $Z$, there exists $k \in \{ -1, 0, 1 \}$ such that
\[
 \Phi(\mathcal{S}, \mathrm{vol}) = k \cdot \mathrm{vol}(S^{3} \setminus K).
\]
\end{theorem}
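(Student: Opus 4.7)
The strategy is to interpret the cycle $\sum_{c} \varepsilon(c)\, C_{r,x,y}$ as the push-forward of the relative fundamental class along a continuous self-map of $S^{3}\setminus K$ extracted from $\mathcal{S}$, and then apply the rigidity theorem that a proper self-map of a finite-volume hyperbolic $3$-manifold has degree in $\{-1,0,1\}$.

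First I would extract a map from $\mathcal{S} = (\mathcal{A},\mathcal{R})$. Since $\mathcal{Q}(K)$ is presented as a quandle by the arcs of $D$ with Wirtinger relations at the crossings, the arc coloring $\mathcal{A}$ extends uniquely to a quandle endomorphism $\widetilde{\mathcal{A}}\colon \mathcal{Q}(K) \to \mathcal{Q}(K)$. By the work of Joyce and Matveev this descends to a group endomorphism of $\pi_{1}(S^{3}\setminus K) \cong \mathcal{G}(\mathcal{Q}(K))$ sending positive meridians to positive meridians, and because $S^{3}\setminus K$ is aspherical it is realized by a continuous map $\phi\colon (S^{3}\setminus K,p)\to (S^{3}\setminus K,q)$, unique up to homotopy. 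The region coloring $\mathcal{R}$, being a compatible $\mathcal{G}(\mathcal{Q}(K))$-equivariant assignment of path classes in $Z$, selects a homotopy class of base-point paths for $\phi$, so that $\phi$ extends to a proper map of pairs $\overline{\phi}\colon (\overline{S^{3}\setminus K},\partial) \to (\overline{S^{3}\setminus K},\partial)$.

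Next I would identify the cycle. In the ideal polyhedral decomposition of $S^{3}\setminus K$ associated to the diagram $D$ (due to D.\ Thurston), each crossing $c$ contributes a signed octahedron whose ideal vertices are the meridian fixed points around $c$, and this octahedron splits canonically into four ideal tetrahedra whose signed sum is the local fundamental chain. Reading off the shadow-colored data at $c$, the singular chain $C_{r,x,y}$ is exactly the $\overline{\phi}$-image of this octahedron, geodesically straightened in $\mathbb{H}^{3}$ via the holonomy $\rho$. Taking the signed sum over $c$ and using Proposition~\ref{prop:2_cocycle_P} for the cycle property, one obtains the homological identity
\[
\biggl[\,\overline{\sum_{c\in\mathcal{C}} \varepsilon(c)\, C_{r,x,y}}\,\biggr] = \overline{\phi}_{*}\bigl[\overline{S^{3}\setminus K},\partial\overline{S^{3}\setminus K}\bigr] \in \mathrm{H}_{3}\bigl(\overline{S^{3}\setminus K},\partial\overline{S^{3}\setminus K}\bigr).
\]

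Applying $\mathrm{algvol}$ to both sides yields $\Phi(\mathcal{S},\mathrm{vol}) = \deg(\overline{\phi})\cdot \mathrm{vol}(S^{3}\setminus K)$, so it suffices to observe $\deg(\overline{\phi}) \in \{-1,0,1\}$. This is the content of the Gromov--Thurston volume inequality: for any proper self-map $f$ of a complete finite-volume hyperbolic $3$-manifold $M$, one has $|\deg(f)|\cdot \mathrm{vol}(M) \le \mathrm{vol}(M)$, whence $|\deg(f)| \le 1$. The main obstacle is the second step: one must pin down precisely which self-map $\overline{\phi}$ corresponds to a given shadow coloring, and then verify that the signed sum of the four-tetrahedron bundles $C_{r,x,y}$ really represents $\overline{\phi}_{*}$ of the fundamental class---rather than some larger integer multiple---which requires a careful matching between the combinatorics of the shadow coloring and the octahedral decomposition of the knot complement.
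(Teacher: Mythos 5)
Your proposal is correct and follows essentially the same route as the paper: both construct from the shadow coloring a continuous self-map of $S^{3} \setminus K$ (you via asphericity and the induced endomorphism of $\mathcal{Q}(K)$, the paper by explicitly gluing the straightened simplices $\{ \widetilde{p}, \widetilde{r}(1), x_{\infty}, y_{\infty} \} \circ \tau$ over a diagram-adapted polyhedral decomposition with cone points $p$ and $q$), identify $\sum_{c} \varepsilon(c) C_{r,x,y}$ as the push-forward of the relative fundamental class, and bound the degree by $1$ using positivity of the simplicial volume of a hyperbolic knot complement. The ``main obstacle'' you flag --- that the chain represents exactly $\deg(\overline{\phi})$ times the fundamental class --- is precisely what the paper's piecewise construction of $f_{\mathcal{S}}$ handles by making the chain equal to the image of the fundamental cycle by definition.
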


\noindent
To prove the theorem, we consider the following decomposition of $S^{3} \setminus K$ introduced in \cite{Hatakenaka1}.
Put a diagram $D$ of $K$ on $S^{2}$ which divides $S^{3}$ into two connected components containing $p$ or $q$ respectively.
Take a dual graph of $D$ on $S^{2}$, and consider its suspension with respect to $p$ and $q$.
Then we have a decomposition of $S^{3} \setminus K$ into thin regions like bananas illustrated in Figure \ref{fig:banana}.
Further we cut each banana into four pieces as depicted in Figure \ref{fig:cut_banana}.

\begin{figure}[htb]
\begin{center}
\includegraphics[scale=0.43]{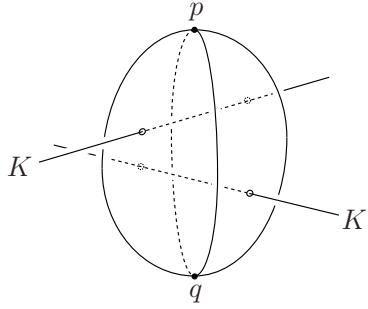}
\end{center}
\caption{A banana}
\label{fig:banana}
\end{figure}

\begin{figure}[htb]
\begin{center}
\includegraphics[scale=0.43]{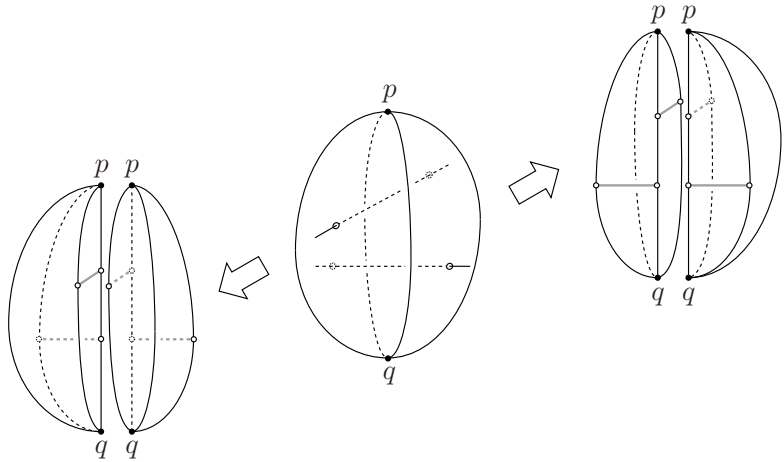}
\end{center}
\caption{Cutting a banana into four pieces}
\label{fig:cut_banana}
\end{figure}

\begin{proof}[Proof of Theorem \ref{thm:2_cocycle}]
For each piece $P$ of bananas, define a surjective continuous map $\tau$ from $P$ to the tetrahedron $\Delta^{3}$ with two ideal vertices as depicted in Figure \ref{fig:banana2tetrahedronA} or \ref{fig:banana2tetrahedronB} depending on the shape of $P$.
Let $x$ or $y$ in $\mathcal{Q}(K)$ be the color of the arc $a$ or $b$ respectively, and $r \in Z$ the color of the region with which the edge $e$ intersects.
Then the composition $\{ \widetilde{p}, \widetilde{r}(1), x_{\infty}, y_{\infty} \} \circ \tau$ is a continuous map from $P$ to $S^{3} \setminus K$.
By the construction, modifying each $\tau$ step-by-step if necessary, we can assume that
\[
 \{ \widetilde{p}, \widetilde{r}(1), x_{\infty}, y_{\infty} \} \circ \tau|_{\partial P \cap \partial P^{\prime}} = \{ \widetilde{p}, \widetilde{r^{\prime}}(1), x^{\prime}_{\infty}, y^{\prime}_{\infty} \} \circ \tau^{\prime}|_{\partial P \cap \partial P^{\prime}}
\]
for each pair of pieces $P$ and $P^{\prime}$ of bananas, where $\tau^{\prime}$ denotes a surjective continuous map from $P^{\prime}$ to $\Delta^{3}$, and $x^{\prime}, y^{\prime} \in \mathcal{Q}(K)$ and $r^{\prime} \in Z$ colors with respect to $P^{\prime}$.
Thus we have a continuous map
\[
 f_{\mathcal{S}} : S^{3} \setminus K \longrightarrow S^{3} \setminus K
\]
satisfying $f_{\mathcal{S}}|_{P} = \{ \widetilde{p}, \widetilde{r}(1), x_{\infty}, y_{\infty} \} \circ \tau$.
By the assumption that $K$ is hyperbolic, the degree of $f_{\mathcal{S}}$ must be $1$, $-1$ or $0$, or else the simplicial volume of $S^{3} \setminus K$ must be $0$ being untrue to the assumption.
Thus
\[
 \biggl[ \hskip 0.1em \overline{\sum_{c \in \mathcal{C}} \varepsilon(c) C_{r, x, y}}, \partial \overline{\sum_{c \in \mathcal{C}} \varepsilon(c) C_{r, x, y}} \hskip 0.1em \biggr] \in \mathrm{H}_{3} \bigl( \hskip 0.1em \overline{S^{3} \setminus K}, \partial \overline{S^{3} \setminus K} \hskip 0.1em \bigr)
\]
must be $1$, $-1$ or $0$ times the fundamental class.
\end{proof}

\begin{figure}[htb]
\begin{center}
\includegraphics[scale=0.43]{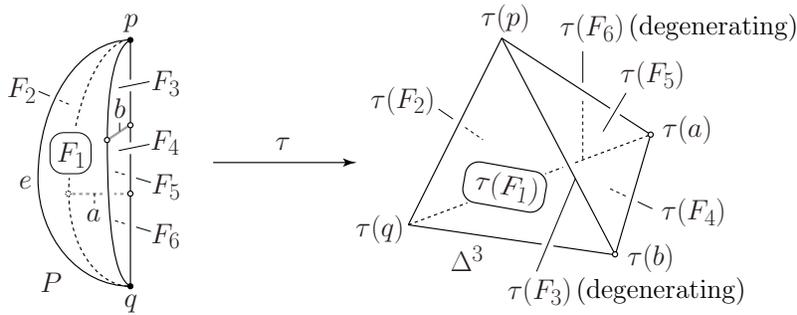}
\end{center}
\caption{A surjective continuous map $\tau$}
\label{fig:banana2tetrahedronA}
\end{figure}

\begin{figure}[htb]
\begin{center}
\includegraphics[scale=0.43]{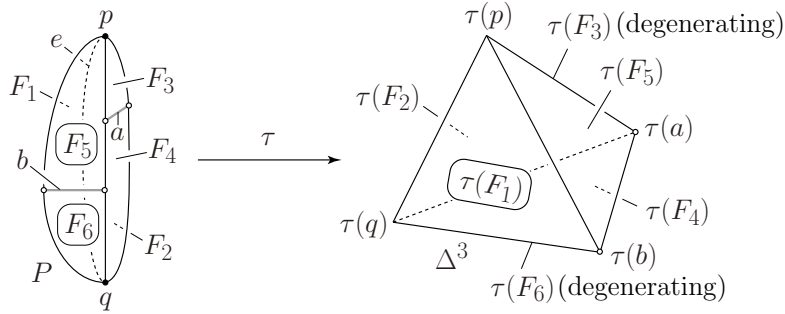}
\end{center}
\caption{Another surjective continuous map $\tau$}
\label{fig:banana2tetrahedronB}
\end{figure}

Let $\mathcal{A}_{\ast}$ be an arc coloring of a diagram $D$ of $K$ with respect to $\mathcal{Q}(K)$ mapping each arc of $D$ to the Wirtinger generator (cf. Section 3.D. of \cite{Rolfsen}) with respect to the arc, $\mathcal{R}_{\ast}$ a region coloring of $D$ with respect to $Z$ mapping each region to the homotopy class of the edge of bananas which intersects with the region, and $\mathcal{S}_{\ast} = (\mathcal{A}_{\ast}, \mathcal{R}_{\ast})$.

\begin{theorem}\label{thm:natural_coloring_2_cocycle}
\[
 \Phi(\mathcal{S}_{\ast}, \mathrm{vol}) = \mathrm{vol}(S^{3} \setminus K).
\]
\end{theorem}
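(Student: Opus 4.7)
The plan is to reduce everything to computing one integer. Theorem \ref{thm:2_cocycle} already tells us that $\Phi(\mathcal{S}_*, \mathrm{vol}) = k\cdot\mathrm{vol}(S^{3}\setminus K)$ for some $k\in\{-1,0,1\}$, and its proof identifies $k$ with the degree of the self-map $f_{\mathcal{S}_*}\colon S^{3}\setminus K \to S^{3}\setminus K$ obtained by glueing, over each banana piece $P$, the composition $\{\widetilde{p},\widetilde{r}(1),x_{\infty},y_{\infty}\}\circ\tau$. So all that remains is to show $\deg f_{\mathcal{S}_*} = 1$, and the cleanest route is to exhibit $f_{\mathcal{S}_*}$ as homotopic to $\mathrm{id}_{S^{3}\setminus K}$.

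The key observation I would use is that, with the natural coloring, the four vertices $\widetilde{p},\ \widetilde{r}(1),\ x_{\infty},\ y_{\infty}$ appearing in $f_{\mathcal{S}_*}|_{P}$ are precisely the natural lifts to $\overline{\mathbb{H}^{3}}$ of the four vertices of $P$ itself (two finite at $p$ and $q$, two ideal on $K$), taken along the lift of $P$ adjacent to $\widetilde{p}$. Indeed, $\widetilde{p}$ is the chosen lift of $p$; the region color $\mathcal{R}_{*}(r)$ is by construction the homotopy class of the banana edge meeting the given region, so its path lift ends at $\widetilde{r}(1)$, i.e.\ at the lift of $q$ sitting on that lifted banana edge; and because $\mathcal{A}_{*}$ sends an arc $a$ to its Wirtinger meridian $x$, the parabolic fixed point $x_{\infty}$ of $\rho(x)$ is exactly the ideal endpoint of the lift of $a$ incident to $\widetilde{p}$. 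Under these identifications, $\{\widetilde{p},\widetilde{r}(1),x_{\infty},y_{\infty}\}\circ\tau$ is just the projection via $\Psi$ of the geodesic straightening of the lifted piece $P\subset \mathbb{H}^{3}$.

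Given this, straight-line homotopy in $\mathbb{H}^{3}$ provides, on each $P$, a homotopy rel $\partial P$ between $f_{\mathcal{S}_*}|_{P}$ and the inclusion $P\hookrightarrow S^{3}\setminus K$, and these local homotopies automatically match along shared boundary faces by the same adjacent-piece compatibility that was already used to define $f_{\mathcal{S}_*}$. Stitching them yields a global homotopy $f_{\mathcal{S}_*}\simeq \mathrm{id}_{S^{3}\setminus K}$, forcing $k=\deg f_{\mathcal{S}_*}=1$ and the theorem.

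The main obstacle in executing the plan cleanly is the vertex-identification step: the definitions of $\mathcal{A}_{*}$, $\mathcal{R}_{*}$, and of the ideal point $\rho(x)_{\infty}$ each implicitly depend on a choice of base region and of base path. One must verify that these conventions are set up coherently so that, as $P$ ranges over all pieces of all bananas, the geodesic tetrahedron $\{\widetilde{p},\widetilde{r}(1),x_{\infty},y_{\infty}\}$ really coincides with the straightening of the intended lift of $P$ --- not merely up to a deck transformation that could flip the sign. Once this global coherence is pinned down, the straight-line homotopy is essentially formal, and the fact that $k=+1$ rather than $-1$ follows from orientation bookkeeping already built into the Wirtinger and shadow-coloring conventions.
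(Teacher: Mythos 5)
Your proposal is correct and follows essentially the same route as the paper: the paper's proof is exactly the observation that for the natural coloring $\mathcal{S}_{\ast}$ the map $\{\widetilde{p},\widetilde{r}(1),x_{\infty},y_{\infty}\}\circ\tau$ on each banana piece is the straightening of the identity, so $f_{\mathcal{S}_{\ast}}\simeq\mathrm{id}$ and $k=1$. Your write-up merely spells out the vertex identifications and the straight-line homotopy that the paper leaves implicit.
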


\begin{proof}
For each piece $P$ of bananas, $\{ \widetilde{p}, \widetilde{r}(1), x_{\infty}, y_{\infty} \} \circ \tau$ is homotopic to the identity map of $P$, because $\{ \widetilde{p}, \widetilde{r}(1), x_{\infty}, y_{\infty} \} \circ \tau$ corresponds to the straightening of the identity map.
Thus $f_{\mathcal{S}_{\ast}}$ is homotopic to the identity map of $S^{3} \setminus K$.
\end{proof}

\section{Determining invertibility and amphicheirality}\label{sec:determining_invertibility_and_amphicheirality}

Let $K$ be an oriented hyperbolic knot in $S^{3}$.
We denote $K$ with reversed orientation by $-K$, and a mirror image of $K$ by $K^{\ast}$.
$-K^{\ast}$ is thus a mirror image of $K$ with reversed orientation.

\begin{theorem}\label{thm:negative_amphichirality}
$K$ is equivalent to $-K^{\ast}$ if and only if there exists a shadow coloring $\mathcal{S}$ of a diagram $D$ of $K$ with respect to $\mathcal{Q}(K)$ and $Z$ satisfying
\[
 \Phi(\mathcal{S}, \mathrm{vol}) = - \mathrm{vol}(S^{3} \setminus K).
\]
\end{theorem}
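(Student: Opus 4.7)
The plan is to interpret $\Phi(\mathcal{S}, \mathrm{vol}) = k \cdot \mathrm{vol}(S^{3} \setminus K)$, with $k \in \{-1, 0, 1\}$, as the statement $\deg f_{\mathcal{S}} = k$, where $f_{\mathcal{S}} : S^{3} \setminus K \to S^{3} \setminus K$ is the continuous map built from the banana decomposition in the proof of Theorem \ref{thm:2_cocycle}. The theorem then reduces to producing, or extracting, a negatively amphicheiral self-homeomorphism of $(S^{3}, K)$ from a shadow coloring with $\deg f_{\mathcal{S}} = -1$.

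For the `if' direction, suppose $\Phi(\mathcal{S}, \mathrm{vol}) = -\mathrm{vol}(S^{3} \setminus K)$. Then $f_{\mathcal{S}}$ is a proper degree $-1$ self-map of the finite-volume hyperbolic $3$-manifold $S^{3} \setminus K$. By Mostow rigidity, together with the standard fact that a proper degree $\pm 1$ self-map of such a manifold is properly homotopic to an isometry, $f_{\mathcal{S}}$ is properly homotopic to an orientation-reversing self-isometry, which extends to an orientation-reversing homeomorphism $\phi : (S^{3}, K) \to (S^{3}, K)$. By construction $f_{\mathcal{S}}$ sends the meridian disk at each arc labeled $x \in \mathcal{Q}(K)$ into the cusp at the parabolic fixed point $x_{\infty}$, so $\phi$ carries positive meridians of $K$ to positive meridians of $K$. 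Since a single ambient orientation reversal would flip the sign of the linking number, $\phi$ must also reverse the orientation of $K$ in order to preserve positive meridians; that is, $\phi$ witnesses that $K$ is equivalent to $-K^{\ast}$.

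For the `only if' direction, assume there exists an orientation-reversing homeomorphism $\phi$ of $S^{3}$ that fixes $K$ as a set and reverses its orientation, witnessing that $K$ is equivalent to $-K^{\ast}$. The same linking-number argument shows that $\phi$ carries positive meridians to positive meridians, so it induces a quandle automorphism $\phi_{\ast}$ of $\mathcal{Q}(K)$ and, after fixing auxiliary paths from $p$ to $\phi(p)$ and from $q$ to $\phi(q)$, a compatible action on $Z$. Transporting the natural shadow coloring $\mathcal{S}_{\ast}$ of $D$ through $\phi_{\ast}$ yields a shadow coloring $\mathcal{S}$ of $D$. Running the proof of Theorem \ref{thm:natural_coloring_2_cocycle} verbatim, with each piece map $\{\widetilde{p}, \widetilde{r}(1), x_{\infty}, y_{\infty}\} \circ \tau$ now corresponding to the straightening of $\phi|_{P}$ rather than of the identity on $P$, shows that $f_{\mathcal{S}}$ is homotopic to $\phi|_{S^{3} \setminus K}$; hence $\deg f_{\mathcal{S}} = -1$ and $\Phi(\mathcal{S}, \mathrm{vol}) = -\mathrm{vol}(S^{3} \setminus K)$.

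The main obstacle I anticipate is the bookkeeping in the `only if' direction: one must arrange that the banana decomposition is fine enough, or that basepoint data are adjusted, so that the straightenings of $\phi$ on adjacent pieces glue into a well-defined map homotopic to $\phi$ itself. Once this gluing is in place, the verification that $\mathcal{S}$ is a genuine shadow coloring and the homotopy $f_{\mathcal{S}} \simeq \phi|_{S^{3} \setminus K}$ follow the pattern already laid down in Theorems \ref{thm:2_cocycle} and \ref{thm:natural_coloring_2_cocycle}.
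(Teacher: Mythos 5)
Your proposal is correct and follows essentially the same route as the paper: the ``if'' direction uses rigidity to upgrade the degree $-1$ map $f_{\mathcal{S}}$ to an orientation-reversing homeomorphism of $(S^{3},K)$ that preserves positive meridians and hence reverses the orientation of $K$, and the ``only if'' direction transports the natural coloring $\mathcal{S}_{\ast}$ through the amphicheirality homeomorphism and reads off the sign from the orientation reversal. The only cosmetic difference is that you phrase the second half via the homotopy $f_{\mathcal{S}} \simeq \phi$ and its degree, whereas the paper works with the induced chain map and the fundamental class of $\bigl( \overline{S^{3} \setminus K}, \partial \overline{S^{3} \setminus K} \bigr)$; these are equivalent.
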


\begin{proof}
First, we show ``if'' part.
By Thurston's rigidity theorem (cf. \cite{Gromov1} for example), there is an orientation reversing homeomorphism $f$ of $S^{3} \setminus K$ being homotopic to $f_{\mathcal{S}}$.
Since $f_{\mathcal{S}}$ maps each positive meridian of $K$ to a positive meridian, we can extend $f$ to an orientation reversing homomorphism of $(S^{3}, K)$ which reverses the orientation of $K$.
Suppose
\[
 m : (S^{3}, K) \longrightarrow (S^{3}, K^{\ast})
\]
is a mirroring.
Then $m \circ f$ is an orientation preserving homeomorphism which reverses the orientation of $K$.
Thus $K$ is equivalent to $-K^{\ast}$.

Next, we show ``only if'' part.
By the assumption, there exists an orientation preserving homeomorphism
\[
 g : (S^{3}, K) \longrightarrow (S^{3}, K^{\ast})
\]
which reverses the orientation of $K$, and thus the composition $m^{-1} \circ g$ is an orientation reversing homeomorphism of $(S^{3}, K)$ which reverses the orientation of $K$.
$m^{-1} \circ g$ induces a map mapping a shadow coloring $\mathcal{S}^{\prime}$ of $D$ with respect to $\mathcal{Q}(K)$ and $Z$ to a shadow coloring $\mathcal{S}$ of $D$ with respect to $\mathcal{Q}(K)$ and $Z$.
We assume $\Phi(\mathcal{S}^{\prime}, \mathrm{vol}) = \mathrm{vol}(S^{3} \setminus K)$.
Further $m^{-1} \circ g$ induces a chain map mapping a singular chain $\displaystyle \sum_{c \in \mathcal{C}} \varepsilon(c) C_{r^{\prime}, x^{\prime}, y^{\prime}}$ to a singular chain $\displaystyle \sum_{c \in \mathcal{C}} \varepsilon(c) C_{r, x, y}$, where $x^{\prime}, y^{\prime} \in \mathcal{Q}(K)$ and $r^{\prime} \in Z$ denote colors around a crossing point $c$ with respect to $\mathcal{S}^{\prime}$, and the correspondence of $x^{\prime}, y^{\prime} \in \mathcal{Q}(K)$ and $r^{\prime} \in Z$ with $x, y \in \mathcal{Q}(K)$ and $r \in Z$ is induced by $m^{-1} \circ g$.
Since $m^{-1} \circ g$ reverses the orientation of $S^{3}$,
\[
 \biggl[ \hskip 0.1em \overline{\sum_{c \in \mathcal{C}} \varepsilon(c) C_{r, x, y}}, \partial \overline{\sum_{c \in \mathcal{C}} \varepsilon(c) C_{r, x, y}} \hskip 0.1em \biggr] \in \mathrm{H}_{3} \bigl( \hskip 0.1em \overline{S^{3} \setminus K}, \partial \overline{S^{3} \setminus K} \hskip 0.1em \bigr)
\]
is $-1$ times the fundamental class.
\end{proof}

Now, let us consider another shadow coloring $\mathcal{S}$ of a diagram $D$ of $K$ with respect to $\mathcal{Q}(K^{\prime})$ and $Z^{\prime}$, where $K^{\prime}$ is another oriented hyperbolic knot in $S^{3}$, and $Z^{\prime}$ the set of homotopy classes of paths in $S^{3} \setminus K^{\prime}$.
Then we also obtain a continuous map
\[
 f_{\mathcal{S}} : S^{3} \setminus K \longrightarrow S^{3} \setminus K^{\prime}
\]
by the same construction described in the previous section, although the range does not coincide with the domain.
Further it is easy to see that
\begin{eqnarray*}
 \Phi(\mathcal{S}, \mathrm{vol}) & = & \mathrm{algvol}(f_{\mathcal{S}}(S^{3} \setminus K)) \\
 & = & k \cdot \mathrm{vol}(S^{3} \setminus K^{\prime}) \qquad (k \in \mathbb{Z}).
\end{eqnarray*}
In particular, the following theorem holds.

\begin{theorem}\label{thm:orientation_reversed}
For each shadow coloring $\mathcal{S}$ of a diagram $D$ of $K$ with respect to $\mathcal{Q}(-K)$ and $Z$, there exists $k \in \{ -1, 0, 1 \}$ such that
\[
 \Phi(\mathcal{S}, \mathrm{vol}) = k \cdot \mathrm{vol}(S^{3} \setminus K).
\]
\end{theorem}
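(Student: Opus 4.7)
The plan is to rerun the banana-decomposition argument of the proof of Theorem \ref{thm:2_cocycle}, this time with the coloring quandle replaced by $\mathcal{Q}(-K)$. First, I would note that the underlying space $S^{3} \setminus (-K)$ coincides with $S^{3} \setminus K$, and the holonomy representation $\rho$ of $\pi_{1}(S^{3} \setminus K)$ is unchanged. A positive meridian of $-K$ is the inverse (in $\pi_{1}(S^{3} \setminus K)$) of a positive meridian of $K$, so $\rho(x)$ for $x \in \mathcal{Q}(-K)$ is still parabolic; since a parabolic isometry and its inverse fix the same point on $S^{2}_{\infty}$, the ideal vertex $x_{\infty}$ remains well-defined. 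Consequently the singular chains $C_{r,x,y}$, their algebraic volumes, and the piecewise-geodesic maps $\tau$ from banana pieces to ideal tetrahedra assemble exactly as before into a continuous map $f_{\mathcal{S}} : S^{3} \setminus K \longrightarrow S^{3} \setminus (-K)$, as already indicated in the paragraph preceding the statement.

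Since target and source are the same finite-volume hyperbolic $3$-manifold, $f_{\mathcal{S}}$ is a self-map of $S^{3} \setminus K$, and
\[
 \Phi(\mathcal{S}, \mathrm{vol}) = \mathrm{algvol}\bigl(f_{\mathcal{S}}(S^{3} \setminus K)\bigr) = (\deg f_{\mathcal{S}}) \cdot \mathrm{vol}(S^{3} \setminus K).
\]
Because $K$ is hyperbolic, the simplicial volume $\|S^{3} \setminus K\|$ is nonzero, and Gromov's inequality for self-maps forces $|\deg f_{\mathcal{S}}| \cdot \|S^{3} \setminus K\| \leq \|S^{3} \setminus K\|$, hence $|\deg f_{\mathcal{S}}| \leq 1$. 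Therefore $k := \deg f_{\mathcal{S}} \in \{-1, 0, 1\}$, giving the claimed trichotomy.

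The principal thing to verify is that the banana construction is robust enough to run with colors from $\mathcal{Q}(-K)$: one needs the gluing conditions between adjacent pieces $P$ and $P^{\prime}$ on the common face $\partial P \cap \partial P^{\prime}$ to remain consistent, and the cancellation pattern of boundary $2$-simplices shown in Figure \ref{fig:cancelling_terms} to persist. Both reduce to the same observation already used for Theorem \ref{thm:2_cocycle}: the data $(\widetilde{r}(1), x_{\infty}, y_{\infty})$ attached to each crossing depends only on the path class $r$ and on the ideal fixed points of the two meridians meeting there, not on the particular orientation chosen to represent those meridians as group elements. Once this is in hand, Theorem \ref{thm:orientation_reversed} is essentially a repackaging of the general identity $\Phi(\mathcal{S}, \mathrm{vol}) = k \cdot \mathrm{vol}(S^{3} \setminus K^{\prime})$ noted just above the statement, sharpened by the Gromov-norm bound on degrees of self-maps of hyperbolic $3$-manifolds with positive simplicial volume.
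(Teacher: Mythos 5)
Your proposal is correct and follows exactly the route the paper intends: the paper omits the proof precisely because, as it notes in the paragraph preceding the statement, the general construction yields a map $f_{\mathcal{S}} : S^{3} \setminus K \to S^{3} \setminus K'$ with $\Phi(\mathcal{S}, \mathrm{vol}) = k \cdot \mathrm{vol}(S^{3} \setminus K')$, and for $K' = -K$ the complement is unchanged, so the degree bound from positive simplicial volume applies verbatim as in Theorem \ref{thm:2_cocycle}. Your additional observation that a parabolic isometry and its inverse share the same fixed point, so the ideal vertices $x_{\infty}$ are well-defined for $x \in \mathcal{Q}(-K)$, is a worthwhile detail the paper leaves implicit.
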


\noindent
We omit the proof.

\begin{theorem}\label{thm:invertibility}
$K$ is equivalent to $-K$ if and only if there exists a shadow coloring $\mathcal{S}$ of a diagram $D$ of $K$ with respect to $\mathcal{Q}(-K)$ and $Z$ satisfying
\[
 \Phi(\mathcal{S}, \mathrm{vol}) = \mathrm{vol}(S^{3} \setminus K).
\]
\end{theorem}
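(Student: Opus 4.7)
The plan is to follow the two-part structure of the proof of Theorem \ref{thm:negative_amphichirality}, with Theorem \ref{thm:orientation_reversed} playing the role that Proposition \ref{prop:2_cocycle_P} plays there, so that the continuous map produced from $\mathcal{S}$ lands in the correct target $S^{3} \setminus (-K)$ and thereby detects the invertibility symmetry rather than the negative-amphicheirality symmetry.

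For the ``if'' direction, the banana construction of Section \ref{sec:hyperbolic_volume_is_quandle_cocycle_invariant} applied to $\mathcal{S}$ yields, as in the discussion preceding Theorem \ref{thm:orientation_reversed}, a continuous map $f_{\mathcal{S}} : S^{3} \setminus K \to S^{3} \setminus (-K)$, and the hypothesis $\Phi(\mathcal{S}, \mathrm{vol}) = \mathrm{vol}(S^{3} \setminus K)$ forces its degree to be $+1$. Thurston's rigidity theorem then supplies an orientation preserving homeomorphism $f$ homotopic to $f_{\mathcal{S}}$. The key observation is that, because the arc colors of $\mathcal{S}$ lie in $\mathcal{Q}(-K)$, $f_{\mathcal{S}}$ maps each positive meridian of $K$ to a positive meridian of $-K$, that is, to a negative meridian of $K$. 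Consequently the extension of $f$ to $(S^{3}, K)$ is orientation preserving on $S^{3}$ but reverses the orientation of $K$, which gives $K \simeq -K$.

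For the ``only if'' direction, suppose $g : (S^{3}, K) \to (S^{3}, -K)$ is an orientation preserving homeomorphism. Since $g$ reverses the orientation of $K$, the induced automorphism of $\pi_{1}(S^{3} \setminus K)$ carries each positive meridian of $K$ to its inverse, which is a positive meridian of $-K$; this yields a quandle isomorphism $\mathcal{Q}(K) \to \mathcal{Q}(-K)$ and transports shadow colorings of $D$ with respect to $\mathcal{Q}(K)$ and $Z$ to shadow colorings of $D$ with respect to $\mathcal{Q}(-K)$ and $Z$. Applying this transport to a coloring $\mathcal{S}^{\prime}$ with $\Phi(\mathcal{S}^{\prime}, \mathrm{vol}) = \mathrm{vol}(S^{3} \setminus K)$, which exists by Theorem \ref{thm:natural_coloring_2_cocycle}, produces the required $\mathcal{S}$. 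Since $g$ preserves the orientation of $S^{3}$, the induced chain map sends $\sum_{c \in \mathcal{C}} \varepsilon(c) C_{r^{\prime}, x^{\prime}, y^{\prime}}$ to $\sum_{c \in \mathcal{C}} \varepsilon(c) C_{r, x, y}$ in a manner that preserves, rather than negates, the fundamental class of $(\overline{S^{3} \setminus K}, \partial \overline{S^{3} \setminus K})$, so $\Phi(\mathcal{S}, \mathrm{vol}) = \mathrm{vol}(S^{3} \setminus K)$.

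The main obstacle I anticipate is the careful bookkeeping of orientation data, because the sign of $\Phi$ and the reversal of the orientation of $K$ are controlled by two logically separate ingredients: the sign is dictated by whether the ambient $S^{3}$-orientation is preserved, whereas the reversal of $K$'s orientation is dictated by the passage from $\mathcal{Q}(K)$ to $\mathcal{Q}(-K)$. The content of the theorem is that these two effects reinforce rather than cancel, and making this precise, especially in the ``if'' direction where one must verify that an orientation preserving homeomorphism of the complement whose peripheral behaviour inverts meridians really does extend to an orientation preserving homeomorphism of $S^{3}$ that reverses the orientation of $K$, is the crux of the argument.
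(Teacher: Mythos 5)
Your proposal is correct and follows essentially the same route as the paper's own proof: Thurston--Mostow rigidity applied to $f_{\mathcal{S}} : S^{3} \setminus K \to S^{3} \setminus (-K)$ for the ``if'' direction, and transporting the coloring $\mathcal{S}_{\ast}$ of Theorem \ref{thm:natural_coloring_2_cocycle} by the orientation preserving homeomorphism $g$ for the ``only if'' direction. Your explicit separation of the two orientation effects (sign of $\Phi$ versus reversal of $K$) is a slightly more careful articulation of what the paper leaves implicit, but the argument is the same.
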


\begin{theorem}\label{thm:positive_amphichirality}
$K$ is equivalent to $K^{\ast}$ if and only if there exists a shadow coloring $\mathcal{S}$ of a diagram $D$ of $K$ with respect to $\mathcal{Q}(-K)$ and $Z$ satisfying
\[
 \Phi(\mathcal{S}, \mathrm{vol}) = - \mathrm{vol}(S^{3} \setminus K).
\]
\end{theorem}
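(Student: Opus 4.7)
The plan is to parallel the proof of Theorem~\ref{thm:negative_amphichirality}, with $\mathcal{Q}(-K)$ in place of $\mathcal{Q}(K)$. Philosophically, the switch from $\mathcal{Q}(K)$ to $\mathcal{Q}(-K)$ corresponds to reversing the orientation of the knot on the target side, while the sign $-1$ on the volume corresponds to reversing the ambient orientation of $S^{3}$. A homeomorphism realizing both of these is exactly what one wants for positive amphicheirality, namely an orientation-preserving homeomorphism $(S^{3},K)\to(S^{3},K^{\ast})$ that preserves the orientation of $K$.

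For the ``if'' direction, I would start with a shadow coloring $\mathcal{S}$ valued in $\mathcal{Q}(-K)$ and $Z$ with $\Phi(\mathcal{S},\mathrm{vol})=-\mathrm{vol}(S^{3}\setminus K)$. The banana decomposition construction of Section~\ref{sec:hyperbolic_volume_is_quandle_cocycle_invariant}, applied as in Theorem~\ref{thm:orientation_reversed} with $K'=-K$, produces a continuous map $f_{\mathcal{S}}:S^{3}\setminus K \to S^{3}\setminus(-K)$ whose algebraic volume is $-\mathrm{vol}(S^{3}\setminus K)$, so its degree is $-1$. By Thurston--Mostow rigidity, $f_{\mathcal{S}}$ is homotopic to an orientation-reversing homeomorphism $f$. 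Since each arc of $D$ is colored by an element of $\mathcal{Q}(-K)$, $f_{\mathcal{S}}$ sends positive meridians of $K$ to positive meridians of $-K$, so $f$ extends to a self-homeomorphism of $(S^{3},K)$ that reverses the ambient orientation but preserves the orientation of $K$. Composing with the mirroring $m$ then produces an orientation-preserving homeomorphism $(S^{3},K)\to(S^{3},K^{\ast})$ preserving the orientation of $K$, whence $K\sim K^{\ast}$.

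For the ``only if'' direction, assume $K\sim K^{\ast}$ via an orientation-preserving homeomorphism $g:(S^{3},K)\to(S^{3},K^{\ast})$ preserving the orientation of $K$. Then $m^{-1}\circ g$ is an orientation-reversing self-homeomorphism of $(S^{3},K)$ preserving the orientation of $K$, and therefore carries each positive meridian of $K$ to a positive meridian of $-K$. Pulling back the natural shadow coloring $\mathcal{S}_{\ast}$ of Theorem~\ref{thm:natural_coloring_2_cocycle} along $m^{-1}\circ g$ yields a shadow coloring $\mathcal{S}$ of $D$ with respect to $\mathcal{Q}(-K)$ and $Z$; since $m^{-1}\circ g$ reverses the ambient orientation, the induced chain map on $\mathrm{H}_{3}(\overline{S^{3}\setminus K},\partial\overline{S^{3}\setminus K})$ sends the fundamental class to its negative, and hence $\Phi(\mathcal{S},\mathrm{vol})=-\mathrm{vol}(S^{3}\setminus K)$. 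The main obstacle is the careful bookkeeping of orientations---in particular, verifying that ``$\mathcal{Q}(-K)$-colored arcs'' geometrically encodes a map that reverses the ambient orientation while preserving the orientation of $K$, and the converse transportation of natural colorings. The analytic content (Mostow rigidity; degree-$\pm 1$ forcing algebraic volume equal to $\pm\mathrm{vol}(S^{3}\setminus K)$) is already contained in Theorem~\ref{thm:orientation_reversed} and the proof of Theorem~\ref{thm:negative_amphichirality}.
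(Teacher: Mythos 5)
Your proposal is correct and follows essentially the approach the paper intends: the paper omits this proof, stating it runs along the same lines as Theorem \ref{thm:invertibility}, and your argument is exactly that adaptation (degree $-1$ map to $S^{3}\setminus(-K)$, rigidity, extension over $K$, composition with the mirroring $m$; and in the converse, pulling back $\mathcal{S}_{\ast}$ along the orientation-reversing self-homeomorphism $m^{-1}\circ g$). The orientation bookkeeping you flag is handled correctly: an ambient-orientation-reversing self-homeomorphism preserving the orientation of $K$ does send positive meridians of $K$ to positive meridians of $-K$, which is precisely why the coloring lands in $\mathcal{Q}(-K)$ and the volume acquires a sign $-1$.
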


\noindent
Theorem \ref{thm:positive_amphichirality} can be proved by the same line along the proof of Theorem \ref{thm:invertibility}.
Thus we only prove Theorem \ref{thm:invertibility}.

\begin{proof}[Proof of Theorem \ref{thm:invertibility}]
First, we show ``if'' part.
By Thurston's rigidity theorem again, there is an orientation preserving homeomorphism $f$ of $S^{3} \setminus K$ being homotopic to $f_{\mathcal{S}}$.
Since $f_{\mathcal{S}}$ maps each positive meridian of $K$ into a positive meridian of $-K$, we can also extend $f$ to an orientation preserving homomorphism of $(S^{3}, K)$ which reverses the orientation of $K$.
Thus $K$ is equivalent to $-K$.

Next, we show ``only if'' part.
By the assumption, there exists an orientation preserving homeomorphism
\[
 g : (S^{3}, K) \longrightarrow (S^{3}, K)
\]
which reverses the orientation of $K$.
$g$ induces a map mapping a shadow coloring $\mathcal{S}^{\prime}$ of $D$ with respect to $\mathcal{Q}(K)$ and $Z$ to a shadow coloring $\mathcal{S}$ of $D$ with respect to $\mathcal{Q}(-K)$ and $Z$.
We assume $\Phi(\mathcal{S}^{\prime}, \mathrm{vol}) = \mathrm{vol}(S^{3} \setminus K)$.
Further $g$ induces a chain map mapping a singular chain $\displaystyle \sum_{c \in \mathcal{C}} \varepsilon(c) C_{r^{\prime}, x^{\prime}, y^{\prime}}$ to a singular chain $\displaystyle \sum_{c \in \mathcal{C}} \varepsilon(c) C_{r, x, y}$, where $x^{\prime}, y^{\prime} \in \mathcal{Q}(K)$ and $r^{\prime} \in Z$ denote colors around a crossing point $c$ with respect to $\mathcal{S}^{\prime}$, and the correspondence of $x^{\prime}, y^{\prime} \in \mathcal{Q}(K)$ and $r^{\prime} \in Z$ with $x, y \in \mathcal{Q}(-K)$ and $r \in Z$ is induced by $g$.
Since $g$ preserves the orientation of $S^{3}$,
\[
 \biggl[ \hskip 0.1em \overline{\sum_{c \in \mathcal{C}} \varepsilon(c) C_{r, x, y}}, \partial \overline{\sum_{c \in \mathcal{C}} \varepsilon(c) C_{r, x, y}} \hskip 0.1em \biggr] \in \mathrm{H}_{3} \bigl( \hskip 0.1em \overline{S^{3} \setminus K}, \partial \overline{S^{3} \setminus K} \hskip 0.1em \bigr)
\]
is the fundamental class.
\end{proof}

\section{Example}\label{sec:example}

Let $K$ be an oriented hyperbolic knot in $S^{3}$, and $\mathcal{S}$ a shadow coloring of a diagram $D$ of $K$ with respect to $\mathcal{Q}(K)$ and $Z$.
We remark that even if we relocate the point $p$ or $q$ to another point $p^{\prime}$ or $q^{\prime}$ in $S^{3}$ along a path $\gamma$ or $\delta$ respectively, the homology class of a singular chain $\displaystyle \sum_{c \in \mathcal{C}} \varepsilon(c) C_{r, x, y}$ with respect to $\mathcal{S}$ does not change.
Further if we choose $p^{\prime}$ or $q^{\prime}$ on $K$ then there is a positive meridian $w$ or $z$ in $\mathcal{Q}(K)$ satisfying $\widetilde{\gamma}(1) = w_{\infty}$ with $\widetilde{\gamma}(0) = \widetilde{p}$ or $\widetilde{r \cdot \delta}(1) = z_{\infty}$ with $\widetilde{r \cdot \delta}(0) = \widetilde{p}$ respectively, where $r \cdot \delta$ denotes the composition of a representative path of $r$ and $\delta$, and each singular chain $C_{r, x, y}$ changes into a singular chain
\begin{eqnarray*}
 C^{w}_{z, x, y} & = & \{ w_{\infty}, z_{\infty}, x_{\infty}, y_{\infty} \} + \{ w_{\infty}, (z \ast x)_{\infty}, y_{\infty}, x_{\infty} \} \\
 & & \hskip 1em + \{ w_{\infty}, ((z \ast x) \ast y)_{\infty}, (x \ast y)_{\infty}, y_{\infty} \} \\
 & & \hskip 2em + \{ w_{\infty}, (z \ast y)_{\infty}, y_{\infty}, (x \ast y)_{\infty} \}.
\end{eqnarray*}
Thus the following theorem holds with a map
\[
 \mathrm{vol}^{w} : \mathcal{Q}(K) \times \mathcal{Q}(K) \times \mathcal{Q}(K) \longrightarrow \mathbb{R}
\]
defined by $\mathrm{vol}^{w}(z, x, y) = \mathrm{algvol}(C^{w}_{z, x, y})$ with some $w \in \mathcal{Q}(K)$.

\begin{theorem}\label{thm:3_cocycle}
$\mathrm{vol}^{w}$ is an $\mathbb{R}$-valued quandle $2$-cocycle with respect to $\mathcal{Q}(K)$.
Further for each shadow coloring $\mathcal{S}$ of a diagram $D$ of $K$ with respect to $\mathcal{Q}(K)$, there exists $k \in \{ -1, 0, 1 \}$ such that
\[
 \Phi(\mathcal{S}, \mathrm{vol}^{w}) = k \cdot \mathrm{vol}(S^{3} \setminus K).
\]
\end{theorem}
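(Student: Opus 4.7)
The plan is to reduce both assertions to results already established for $\mathrm{vol}$, via the base-point relocation described in the paragraph immediately before the theorem.

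First consider the quandle $2$-cocycle property. The argument of Proposition \ref{prop:2_cocycle} transfers essentially verbatim once we observe that $C^{w}_{z,x,y}$ is obtained from $C_{r,x,y}$ by replacing the lifts $\widetilde{p}$, $\widetilde{r}(1)$, $\widetilde{r \cdot x}(1)$, $\widetilde{r \cdot x y}(1)$, $\widetilde{r \cdot y}(1)$ with the ideal points $w_{\infty}$, $z_{\infty}$, $(z \ast x)_{\infty}$, $((z \ast x) \ast y)_{\infty}$, $(z \ast y)_{\infty}$ respectively. Axiom (i) then holds because each tetrahedron constructing $C^{w}_{z,x,x}$ degenerates (for instance $\{ w_{\infty}, z_{\infty}, x_{\infty}, x_{\infty} \}$ contains a repeated ideal vertex). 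For axiom (ii) I would apply the same sequence of Pachner moves as in Proposition \ref{prop:2_cocycle} to the chain $C^{w}_{z,x,y} + C^{w}_{z \ast y, x \ast y, u} + C^{w}_{z, y, u}$ (using a fresh third variable $u \in \mathcal{Q}(K)$); the moves depend only on the combinatorial type of the tetrahedra and are unaffected by the change from finite vertices to ideal ones.

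For the integrality statement, the relocation principle stated before the theorem asserts that sliding $p$ to $p^{\prime}$ and $q$ to $q^{\prime}$ along paths $\gamma$ and $\delta$ leaves the relative homology class
\[
 \biggl[ \hskip 0.1em \overline{\sum_{c \in \mathcal{C}} \varepsilon(c) C_{r, x, y}}, \partial \overline{\sum_{c \in \mathcal{C}} \varepsilon(c) C_{r, x, y}} \hskip 0.1em \biggr] \in \mathrm{H}_{3} \bigl( \hskip 0.1em \overline{S^{3} \setminus K}, \partial \overline{S^{3} \setminus K} \hskip 0.1em \bigr)
\]
unchanged. Choosing $p^{\prime}$ and $q^{\prime}$ on $K$ turns every summand $C_{r, x, y}$ into $C^{w}_{z, x, y}$, so $\sum_{c} \varepsilon(c) C^{w}_{z, x, y}$ represents the same class. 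By Theorem \ref{thm:2_cocycle} this class is $k$ times the fundamental class for some $k \in \{ -1, 0, 1 \}$, and passing to algebraic volumes yields $\Phi(\mathcal{S}, \mathrm{vol}^{w}) = k \cdot \mathrm{vol}(S^{3} \setminus K)$.

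The main subtlety I expect is in the relocation argument itself: one must verify that pushing $p$ and $q$ along $\gamma$ and $\delta$ all the way onto $K$, so that their lifts escape to cusps of $\mathbb{H}^{3}$, induces a chain map that differs from the identity on $\overline{S^{3} \setminus K}$ by a boundary, so that the homology class really is preserved in the limit. Once this homological invariance is in place, both conclusions of the theorem follow with no further computation.
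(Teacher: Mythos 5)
Your proposal is correct and follows exactly the route the paper intends: the paper omits the proof of this theorem but sketches it in the preceding paragraph, namely that relocating $p$ and $q$ onto $K$ turns each $C_{r,x,y}$ into $C^{w}_{z,x,y}$ without changing the relative homology class, after which Proposition \ref{prop:2_cocycle} (degeneracy and Pachner moves, now with all four vertices ideal) and Theorem \ref{thm:2_cocycle} give the two assertions. The subtlety you flag about the lifts escaping to the cusps is real but is asserted without further justification in the paper as well, so your write-up is at the same level of detail as the source.
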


\noindent
We omit the proof.
It is easy to see that similar theorems to Theorem \ref{thm:negative_amphichirality}, \ref{thm:orientation_reversed}, \ref{thm:invertibility} and \ref{thm:positive_amphichirality} also hold with respect to this quandle $2$-cocycle.

We close this paper by computing some elements of above quandle cocycle invariants for the figure eight knot.
Associated with a diagram $D$ of an oriented figure eight knot $K$, we choose Wirtinger generators of $\pi_{1}(S^{3} \setminus K)$ $x$, $y$, $z$, and $w$ as depicted in Figure \ref{fig:figure_eight}.
Further we define a holonomy representation $\rho$ or $\rho_{-}$ of $\pi_{1}(S^{3} \setminus K)$ or $\pi_{1}(S^{3} \setminus -K)$ to satisfy the following equations respectively:
\[
 \rho(x) = \begin{pmatrix} \frac{1 - \sqrt{-3}}{2} & \frac{1 + \sqrt{-3}}{2} \\ \frac{-1 - \sqrt{-3}}{2} & \frac{3 + \sqrt{-3}}{2} \end{pmatrix}, \
 \rho(y) = \begin{pmatrix} 1 & \frac{1 + \sqrt{-3}}{2} \\ 0 & 1 \end{pmatrix},
\]
\[
 \rho(z) = \begin{pmatrix} \frac{3 + \sqrt{-3}}{2} & \frac{1 - \sqrt{-3}}{2} \\ 1 & \frac{1 - \sqrt{-3}}{2} \end{pmatrix}, \mathrm{and} \enskip
 \rho(w) = \begin{pmatrix} 1 & 0 \\ 1 & 1 \end{pmatrix}.
\]
\[
 \rho_{-}(x^{-1}) = \begin{pmatrix} 1 & 0 \\ 1 & 1 \end{pmatrix}, \
 \rho_{-}(y^{-1}) = \begin{pmatrix} 1 + \sqrt{-3} & \frac{1 - \sqrt{-3}}{2} \\ \frac{3 + 3 \sqrt{-3}}{2} & 1 - \sqrt{-3} \end{pmatrix},
\]
\[
 \rho_{-}(z^{-1}) = \begin{pmatrix} \frac{3 + \sqrt{-3}}{2} & \frac{1 - \sqrt{-3}}{2} \\ 1 & \frac{1 - \sqrt{-3}}{2} \end{pmatrix}, \mathrm{and} \enskip
 \rho_{-}(w^{-1}) = \begin{pmatrix} \frac{1 - \sqrt{-3}}{2} & \frac{1 + \sqrt{-3}}{2} \\ \frac{-1 - \sqrt{-3}}{2} & \frac{3 + \sqrt{-3}}{2} \end{pmatrix}.
\]

\begin{figure}[htb]
\begin{center}
\includegraphics[scale=0.35]{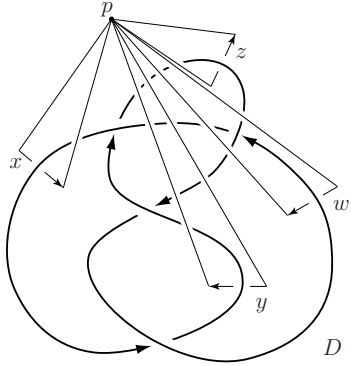}
\end{center}
\caption{Wirtinger generators of $\pi_{1}(S^{3} \setminus K)$}
\label{fig:figure_eight}
\end{figure}

\begin{example}
For a shadow coloring $\mathcal{S}_{1}$ of $D$ with respect to $\mathcal{Q}(K)$ depicted in the left-hand side of Figure \ref{fig:shadow_coloring_of_figure_eight12},
\begin{eqnarray*}
 \Phi(\mathcal{S}_{1}, \mathrm{vol}^{w}) & = & \mathrm{algvol}(\{ w_{\infty}, (y \ast z)_{\infty}, y_{\infty}, z_{\infty} \} - \{ w_{\infty}, y_{\infty}, z_{\infty}, x_{\infty} \}) \\
 & = & \mathrm{algvol}(\{ 0, \tfrac{-1 + \sqrt{-3}}{2}, \infty, \tfrac{1 + \sqrt{-3}}{2} \} - \{ 0, \infty, \tfrac{1 + \sqrt{-3}}{2}, 1 \}) \\
 & = & \mathrm{vol}(S^{3} \setminus K),
\end{eqnarray*}
where we use the upper half-space model of $\mathbb{H}^{3}$.
\end{example}

\begin{example}
For a shadow coloring $\mathcal{S}_{2}$ of $D$ with respect to $\mathcal{Q}(K)$ depicted in the right-hand side of Figure \ref{fig:shadow_coloring_of_figure_eight12},
\begin{eqnarray*}
 \Phi(\mathcal{S}_{2}, \mathrm{vol}^{w}) & = & \mathrm{algvol}(\{ w_{\infty}, (y \ast w)_{\infty}, (y \ast z)_{\infty}, y_{\infty} \} \\
 & & \hskip 6em - \{ w_{\infty}, (y \ast z)_{\infty}, y_{\infty}, z_{\infty} \}) \\
 & = & \mathrm{algvol}(\{ 0, -1, \tfrac{-1 + \sqrt{-3}}{2}, \infty \} - \{ 0, \tfrac{-1 + \sqrt{-3}}{2}, \infty, \tfrac{1 + \sqrt{-3}}{2} \}) \\
 & = & - \mathrm{vol}(S^{3} \setminus K).
\end{eqnarray*}
\end{example}

\begin{figure}[htb]
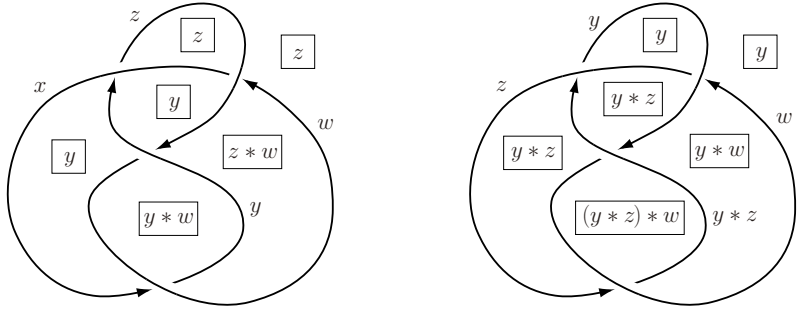

\begin{center}
\includegraphics[scale=0.35]{shadow_coloring_of_figure_eight1.eps} \qquad \qquad
\includegraphics[scale=0.35]{shadow_coloring_of_figure_eight2.eps}
\end{center}
\caption{$\mathcal{S}_{1}$ (left) and $\mathcal{S}_{2}$ (right)}
\label{fig:shadow_coloring_of_figure_eight12}
\end{figure}

\begin{example}
For a shadow coloring $\mathcal{S}_{3}$ of $D$ with respect to $\mathcal{Q}(-K)$ depicted in the left-hand side of Figure \ref{fig:shadow_coloring_of_figure_eight34},
\begin{eqnarray*}
 \Phi(\mathcal{S}_{3}, \mathrm{vol}^{x^{-1}}) & = & \mathrm{algvol}(\{ x^{-1}_{\infty}, w^{-1}_{\infty}, z^{-1}_{\infty}, (w^{-1} \ast x^{-1})_{\infty} \} \\
 & & \hskip 3em - \{ x^{-1}_{\infty}, (x^{-1} \ast w^{-1})_{\infty}, (w^{-1} \ast x^{-1})_{\infty}, w^{-1}_{\infty} \}) \\
 & = & \mathrm{algvol}(\{ 0, 1, \tfrac{1 + \sqrt{-3}}{2}, \infty \} - \{ 0, \tfrac{1 - \sqrt{-3}}{2}, \infty, 1 \}) \\
 & = & \mathrm{vol}(S^{3} \setminus K).
\end{eqnarray*}
\end{example}

\begin{example}
For a shadow coloring $\mathcal{S}_{4}$ of $D$ with respect to $\mathcal{Q}(-K)$ depicted in the right-hand side of Figure \ref{fig:shadow_coloring_of_figure_eight34},
\begin{eqnarray*}
 \Phi(\mathcal{S}_{4}, \mathrm{vol}^{x^{-1}}) & = & \mathrm{algvol}(\{ x^{-1}_{\infty}, z^{-1}_{\infty}, (w^{-1} \ast x^{-1})_{\infty}, (z^{-1} \ast x^{-1})_{\infty} \} \\
 & & \hskip 4em - \{ x^{-1}_{\infty}, (x^{-1} \ast z^{-1})_{\infty}, (z^{-1} \ast x^{-1})_{\infty}, z^{-1}_{\infty} \}) \\
 & = & \mathrm{algvol}(\{ 0, \tfrac{1 + \sqrt{-3}}{2}, \infty, \tfrac{-1 + \sqrt{-3}}{2} \} \\
 & & \hskip 4em - \{ 0, \tfrac{\sqrt{-3}}{3}, \tfrac{-1 + \sqrt{-3}}{2}, \tfrac{1 + \sqrt{-3}}{2} \}) \\
 & = & - \mathrm{vol}(S^{3} \setminus K).
\end{eqnarray*}
\end{example}

\begin{figure}[htb]
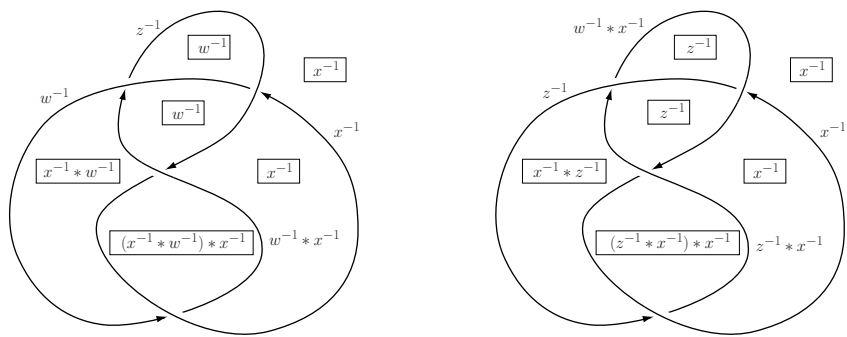

\begin{center}
\includegraphics[scale=0.25]{shadow_coloring_of_figure_eight3.eps} \qquad \qquad
\includegraphics[scale=0.25]{shadow_coloring_of_figure_eight4.eps}
\end{center}
\caption{$\mathcal{S}_{3}$ (left) and $\mathcal{S}_{4}$ (right)}
\label{fig:shadow_coloring_of_figure_eight34}
\end{figure}

\noindent
In conclusion, we have confirmed that the figure eight knot is invertible and positive/negative amphicheiral, as is well known.

\bibliographystyle{amsplain}

\end{document}